\newtheorem{thm}{Theorem}[section]
\newtheorem{cor}[thm]{Corollary}
\theoremstyle{definition}
\theoremstyle{remark}
\numberwithin{equation}{section}
\begin{document}

\title[Hyers-Ulam-Rassias stability]{A Fixed Points Approach to    stability of the
 Pexider  Equation }
\author[  E. Elqorachi, J. M. Rassias and B. Bouikhalene]{ E. Elqorachi, John M. Rassias and B. Bouikhalene}


\begin{abstract} Using the  fixed point theorem we establish the  Hyers-Ulam-Rassias stability  of  the generalized Pexider functional
equation $$\frac{1}{\mid K\mid}\sum_{k\in K}f(x+k\cdot
y)=g(x)+h(y),\;\;x,y\in E$$ from a normed space $E$ into a complete
$\beta$-normed  space $F$,
 where $K$ is a finite abelian subgroup of the  automorphism  group  of the group $(E,+)$.
\end{abstract}
\maketitle
\section{Introduction and Preliminaries} Under what condition does there exist a group homomorphism near an approximate group homomorphism?
This question concerning the stability of group homomorphisms was
posed by S. M. Ulam \cite{43}. In 1941, the Ulam's problem for the
case of approximately additive mappings was solved by D. H. Hyers
\cite{7} on Banach
  spaces. In 1950 T. Aoki \cite{0} provided a generalization of the
  Hyers' theorem for additive mappings and in 1978 Th. M. Rassias
  \cite{19} generalized the Hyers' theorem for linear mappings by
  considering an unbounded Cauchy difference. The result of Rassias' theorem has been generalized by J.M. Rassias \cite{12f} and later by  G\v{a}vruta \cite{Gav}
who permitted the Cauchy difference to be bounded by a general
  control function.
 Since then, the stability problems for several  functional equations have been
  extensively investigated  (cf.  \cite{0},...,  \cite{mos1}, \cite{4c},..., \cite{Rato1}, \cite{37},..., \cite{31a} and  \cite{Yang}).\\
Let $E$ be a real vector space and $F$ be a real Banach space. Let
$K$ be a finite abelian subgroup of $Aut(E)$ (the automorphism group
of the group $(E,+$), $|K|$ denotes the order of $K$. Writing the
action of $k\in{K}$ on $x\in{E}$ as $k\cdot x$, we will say that
$(f,g,h)\;:E\rightarrow F$ is a solution of the generalized Pexider
functional equation, if
\begin{equation}\label{eq11}
 \frac{1}{|K|}\sum_{k\in{K}} f(x+k\cdot y)=g(x)+h(y),\phantom{+}
    x,y\in{E}
\end{equation}
The generalized quadratic functional equation\begin{equation}\label{eq12}
 \frac{1}{|K|}\sum_{k\in{K}} f(x+k\cdot y)=f(x)+f(y),\phantom{+}
    x,y\in{E}
\end{equation} and   the generalized Jensen functional equation
\begin{equation}\label{eq13}
 \frac{1}{|K|}\sum_{k\in{K}} f(x+k\cdot y)=f(x),
    \phantom{+}x,y\in{E}
\end{equation}are  particulars cases of equation (\ref{eq11}).
\\The  functional equations (\ref{eq11}), (\ref{eq12}) and
(\ref{eq13}) appeared in several works by
H. Stetk\ae r, see for example \cite{38}, \cite{40} and \cite{41}. We refer also to the recent studies by {{\L}}. Rados{\l}aw \cite{Rato1} and \cite{Rato2}.  \\
If we set $K=\{I,\sigma\}$, were $I$: $E\longrightarrow E$ denotes
the identity function and $\sigma$ denote an additive function of
$E$, such that $\sigma(\sigma(x))=x,$ for all $x\in E$ then equation
(\ref{eq11}) reduces to the Pexider functionals equations
\begin{equation}\label{eq14}
f(x+y)+f(x+\sigma(y))=g(x)+h(y),\;x,y\in{E},
\end{equation}
\begin{equation}\label{eq15}
f(x+y)=g(x)+h(y),\;x,y\in{E},\;\;\;(\sigma=I)
\end{equation}
\begin{equation}\label{eq14}
f(x+y)+f(x-y)=g(x)+h(y),\;x,y\in{E},\;\;(\sigma=-I)
\end{equation}Y. H. Lee and K. W. Jung \cite{Lee} obtained the Hyers-Ulam-Rassias of the Pexider functional equation (\ref{eq15}). Jung \cite{10a} and Jung and Sahoo \cite{10c} investigated the Hyers-Ulam-Rassias stability of equation (\ref{eq14}).
 Belaid et al.  have proved the Hyers-Ulam stability of equation (\ref{eq11}) and the Hyers-Ulam-Rassias
stability of the  functional equations (\ref{eq12}), (\ref{eq13}), (see \cite{1}, \cite{2}, \cite{2a} and \cite{man1} ).\\
Recently, Rados{\l}aw \cite{Rato1} obtained the Hyers-Ulam-Rassias stability of equation (\ref{eq11}).
In 2003 L. C\v{a}dariu and V. Radu \cite{1a} notice that a fixed point alternative method is very important for the solution
of the Hyers-Ulam stability problem. Subsequently, this method was applied to investigate
the Hyers-Ulam-Rassias stability for Jensen functional equation, as well as for the additive Cauchy functional equation \cite{Aa}
 by considering a general control function $\varphi(x,y)$, with suitable properties, using such an elegant idea,
 several authors applied the method to investigate the stability of some functional equations, see for example  \cite{akk}, \cite{akk2},  \cite{zoom}, \cite{mos3} and \cite{12e}.\\\\
 The \textit{fixed point method} was used for the first time by J. A. Baker  \cite{bak} who applied a variant of Banach's fixed point theorem to obtain the Hyers Ulam stability of a functional equation in a single variable. For more information we refer to  the recent studies by K. Cieplin\'{s}ki \cite{Cie}. \\\\In this paper, we will apply the fixed point method as in \cite{1a} to
prove the Hyers-Ulam-Rassias stability of the functional equations
(\ref{eq11}), for a large classe of functions from a vector space $E$ into complete $\beta$-normed space $F$.\\\\ Now, we recall   one of fundamental results of fixed point theory. \\Let $X$ be a set.  A function $d:X\times X \rightarrow
[0,\infty]$ is called a \textit{generalized metric} on $X$ if $d$
satisfies the
following:\\
(1) $d(x,y)=0$ if and only if $x=y$;\\
(2) $d(x,y)=d(y,x)$ for all $x,y\in{X}$;\\
(2) $d(x,z)\leq d(x,y)+d(y,z)$ for all $x,y,z\in{X}$.
\begin{thm}\label{t3}  \cite{diaz} Suppose we are given a complete generalized metric space $(X,d)$ and a strictly
contractive mapping $J:X\rightarrow X$, white the Lipshitz
constant $L<1$. If there exists a nonnegative integer $k$ such that  $d(J^{k}x,J^{k+1}x)<\infty$ for some $x\in X$, then the following are true:\\
(1) the sequence ${J^{n}x}$ converges to a fixed point
$x^{\ast}$ of $J$;\\
(2) $x^{\ast}$ is the unique fixed point of $J$ in the
set  $Y=\{y\in{X}:d(J^{k}x,y)<\infty\}$;\\
(3)\phantom{+}$d(y,x^{\ast})\leq \frac{1}{1-L} d(y,Jy)$ for all
$y\in{Y}$.
\end{thm}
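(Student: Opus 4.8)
The plan is to deduce Theorem~\ref{t3} from the classical Banach contraction principle by restricting $J$ to a suitable $J$-invariant subset of $X$ on which $d$ is an ordinary, finite-valued metric. Set $x_{0}:=J^{k}x$, so that $d(x_{0},Jx_{0})<\infty$ by hypothesis, and let $Y:=\{y\in X:d(J^{k}x,y)<\infty\}=\{y\in X:d(x_{0},y)<\infty\}$ be precisely the set appearing in part~(2).

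First I would establish two closure properties of $Y$. It is \emph{$J$-invariant}: for $y\in Y$,
$d(x_{0},Jy)\le d(x_{0},Jx_{0})+d(Jx_{0},Jy)\le d(x_{0},Jx_{0})+L\,d(x_{0},y)<\infty$, so $Jy\in Y$. And \emph{$(Y,d)$ is a complete metric space}: for $y,y'\in Y$ one has $d(y,y')\le d(y,x_{0})+d(x_{0},y')<\infty$, so $d$ restricted to $Y$ is genuinely $[0,\infty)$-valued and hence an honest metric there; moreover, if $(y_{n})\subset Y$ is $d$-Cauchy then $y_{n}\to y^{*}$ for some $y^{*}\in X$ by completeness of $X$, and $d(x_{0},y^{*})\le d(x_{0},y_{n})+d(y_{n},y^{*})<\infty$ once $n$ is large, hence $y^{*}\in Y$.

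Given this, $J$ maps the complete metric space $(Y,d)$ into itself and is a contraction with constant $L<1$, and $x_{0}\in Y$; so the classical Banach fixed point theorem applies inside $Y$. It yields a unique fixed point $x^{*}$ of $J$ in $Y$ (part~(2)), gives $d(y,x^{*})\le\frac{1}{1-L}\,d(y,Jy)$ for every $y\in Y$ (part~(3); note $d(y,Jy)<\infty$ automatically because $y,Jy\in Y$, and the inequality is vacuous wherever a distance is infinite), and shows that the iterates of any point of $Y$, in particular of $x_{0}$, converge to $x^{*}$. Since $J^{n}x=J^{n-k}x_{0}$ for $n\ge k$, the tail of the sequence $(J^{n}x)$ is exactly the sequence of iterates of $x_{0}\in Y$, whence $J^{n}x\to x^{*}$, which is part~(1).

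Alternatively one can argue by hand: $d(J^{m}x_{0},J^{n}x_{0})\le\sum_{j=n}^{m-1}L^{j}\,d(x_{0},Jx_{0})$ shows $(J^{n}x_{0})$ is Cauchy, hence convergent to some $x^{*}$; then $d(Jx^{*},x^{*})\le L\,d(x^{*},J^{n}x_{0})+d(J^{n+1}x_{0},x^{*})\to0$ gives $Jx^{*}=x^{*}$; and $d(y,x^{*})\le d(y,Jy)+L\,d(y,x^{*})$ rearranges to part~(3) once $d(y,x^{*})<\infty$ is known. Either way, the one point that genuinely requires care is the bookkeeping forced by the extended-valued metric: one must verify \emph{finiteness} of the relevant distances — that $x^{*}\in Y$, i.e. $d(x_{0},x^{*})<\infty$, and that $d(y,x^{*})<\infty$ for $y\in Y$ — before invoking the triangle inequality or subtracting $L\,d(y,x^{*})$ from both sides, since such manipulations are illegitimate with $\infty$ in play. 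This is the main (and essentially the only) obstacle; the analytic content reduces to summing a geometric series.
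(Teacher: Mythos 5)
Your proof is correct. Note that the paper itself offers no proof of this statement: it is quoted verbatim as the Diaz--Margolis fixed point alternative and attributed to \cite{diaz}, so there is no in-paper argument to compare against. Your reduction --- restricting $J$ to the $J$-invariant set $Y=\{y: d(J^{k}x,y)<\infty\}$, checking that $d$ is finite-valued and complete there, and then invoking the classical Banach contraction principle --- is a clean and complete route, and you correctly isolate the only delicate point, namely verifying finiteness of the relevant distances before using the triangle inequality or cancelling $L\,d(y,x^{*})$. The original Diaz--Margolis argument is essentially your ``by hand'' alternative (geometric-series estimate on $d(J^{n}x_{0},J^{m}x_{0})$, passage to the limit, and the a priori bound $d(y,x^{*})\le d(y,Jy)+L\,d(y,x^{*})$), so the two versions you give bracket both the classical proof and its modern repackaging. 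One tiny quibble: for $y\in Y$ the inequality in part (3) is never vacuous, since you have already shown $y,Jy,x^{*}\in Y$ forces all the distances involved to be finite; the parenthetical about vacuity is harmless but unnecessary.
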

 Throughout this paper, we fix a real number
$\beta$ with $0<\beta\leq 1$ and let $\mathbb{K}$ denote either
$\mathbb{R}$ or $\mathbb{C}$. Suppose $E$ is a vector space over
$\mathbb{K}$. A function $\|.\|_{\beta}$: $E\longrightarrow
[0,\infty)$ is called a $\beta$-norm if and only if it satisfies\\
(1) $\|x\|_{\beta}=0$, if and only if $x = 0$;\\
(2) $\|\lambda x\|_{\beta}=|\lambda|^{\beta}\|x\|_{\beta}$ for all $\lambda\in \mathbb{K}$ and all $x \in E$; \\
(3) $\| x+y\|_{\beta}\leq \|x\|_{\beta}+\|y\|_{\beta}$ for all $x,
y$ $\in E$.\section{main results} In the following theorem, by using
an idea of C\v{a}dariu and Radu \cite{1a, Aa}, we  prove the
Hyers-Ulam-Rassias stability of the generalized Pexider functional
equation (\ref{eq11}).
\begin{thm}\label{t21}Let $E$ be a vector space over $\mathbb{K}$ and let
$F$ be a complete $\beta$-normed space over $\mathbb{K}$. Let $K$ be
a finite abelian   subgroup of the  automorphism group of $(E,+)$.
Let $f$: $E\longrightarrow F$ be a mapping for which there exists a
function $\varphi:E\times F\rightarrow [0,\infty)$ and  a constant
$L<1$, such that
\begin{equation}\label{eq21}
\|\frac{1}{|K|}\sum_{k\in{K}} f(x+k\cdot
y)-g(x)-h(y)\|_{\beta}\leq\varphi(x,y)
\end{equation}and
\begin{equation}\label{eq22}
\sum_{k\in{K}} \varphi(x+k\cdot x,y+k\cdot y)\leq(|2
K|)^{\beta}L\varphi(x,y)
\end{equation}
for all $x,y\in E$.  Then, there exists a unique solution $q$:
$E\longrightarrow F$ of the generalierd quadratic functional
equation (\ref{eq12}) and a unique solution $j$: $E\longrightarrow
F$ of the generalized Jensen functional equation (\ref{eq13}) such
that

\begin{equation}\label{eq001}
    \frac{1}{|K|}\sum_{k\in K}j(k\cdot x)=0,
\end{equation}

\begin{equation}\label{eq23}
    \| f(x)-q(x)-j(x)-g(0)-h(0)\|_{\beta}\leq\frac{2}{2^{\beta}}\frac{1}{1-L}\chi(x,x)+\frac{1}{2^{\beta}}\frac{1}{1-L}\psi(x,x),
\end{equation}
\begin{equation}\label{eq24}
    \|
    g(x)-q(x)-j(x)-g(0)\|_{\beta}\leq\varphi(x,0)+\frac{2}{2^{\beta}}\frac{1}{1-L}\chi(x,x)+\frac{1}{2^{\beta}}\frac{1}{1-L}\psi(x,x)
\end{equation}
and \begin{equation}\label{eq25}
    \| h(x)-q(x)-h(0)\|_{\beta}\leq\frac{1}{2^{\beta}}\frac{1}{1-L}\psi(x,x)+\varphi(0,x)
\end{equation}
for all $x\in{E}$, where
$$\chi(x,y)=\frac{|K|}{|K|^{\beta}}\varphi(0,y)+\varphi(x,y)+\varphi(x,0)+\varphi(0,y)$$
$$+\frac{1}{|K|^{\beta}}\sum_{k\in K}[\varphi(k\cdot x,y)+\varphi(k\cdot x,0)]$$
and
$$\psi(x,y)=\frac{|K|}{|K|^{\beta}}\varphi(0,y)+\frac{1}{|K|^{\beta}}\sum_{k\in
K}[\varphi(k\cdot x,y)+\varphi(k\cdot x,0)].$$
\end{thm}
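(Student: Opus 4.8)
The plan is to reduce the Pexider equation to a condition on a single function, split that function into a ``quadratic'' part and a ``Jensen'' part by averaging over $K$, and then apply the Diaz--Margolis theorem (Theorem~\ref{t3}) to each part separately.

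First I would feed $(x,y)=(0,0)$, $(x,0)$ and $(0,y)$ into \eqref{eq21}, getting $\|f(0)-g(0)-h(0)\|_\beta\le\varphi(0,0)$, $\|g(x)-f(x)+h(0)\|_\beta\le\varphi(x,0)$ and $\|h(y)-\tfrac1{|K|}\sum_{k\in K}f(k\cdot y)+g(0)\|_\beta\le\varphi(0,y)$. Putting $F(x)=f(x)-g(0)-h(0)$ and combining these with \eqref{eq21} itself, subadditivity of $\|\cdot\|_\beta$ gives
$$\Big\|\tfrac1{|K|}\sum_{k\in K}F(x+k\cdot y)-F(x)-\tfrac1{|K|}\sum_{k\in K}F(k\cdot y)\Big\|_\beta\le\varphi(x,y)+\varphi(x,0)+\varphi(0,y)=:\Phi(x,y).$$
Then, with the $K$-averaging map $(Pu)(x)=\tfrac1{|K|}\sum_{k\in K}u(k\cdot x)$, set $Q_0=PF$ and $J_0=F-PF$, so $F=Q_0+J_0$. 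Because $K$ is a \emph{finite abelian} group the double sums $\sum_{k,k'\in K}$ reindex freely and, crucially, $m\cdot\big((I+k)\cdot x\big)=(I+k)\cdot(m\cdot x)$ for $m,k\in K$; together with the triangle inequality for $\|\cdot\|_\beta$ this yields $PJ_0=0$ identically, along with
$$\Big\|\tfrac1{|K|}\sum_{k\in K}Q_0(x+k\cdot y)-Q_0(x)-Q_0(y)\Big\|_\beta\le\psi(x,y),\qquad \Big\|\tfrac1{|K|}\sum_{k\in K}J_0(x+k\cdot y)-J_0(x)\Big\|_\beta\le\chi(x,y)=\Phi(x,y)+\psi(x,y).$$
So $Q_0$ is an approximate solution of \eqref{eq12} and $J_0$ an approximate solution of \eqref{eq13}.

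Next I would invoke Theorem~\ref{t3} twice. For the quadratic part, on $X=\{u:E\to F\}$ with $d_1(u,v)=\inf\{c\ge0:\|u(x)-v(x)\|_\beta\le c\,\psi(x,x)\ \forall x\}$, take $(\Lambda u)(x)=\tfrac1{2|K|}\sum_{k\in K}u(x+k\cdot x)$. The key computation, obtained by applying \eqref{eq22} at the points $m\cdot x$ (and using $m\cdot((I+k)\cdot x)=(I+k)\cdot(m\cdot x)$), is $\sum_{k\in K}\psi(x+k\cdot x,y+k\cdot y)\le(2|K|)^\beta L\,\psi(x,y)$, which makes $\Lambda$ strictly contractive with constant $L$; moreover $d_1(Q_0,\Lambda Q_0)\le 2^{-\beta}$ by the approximate quadratic identity at $y=x$. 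Theorem~\ref{t3} yields a fixed point $q=\lim_n\Lambda^nQ_0$, unique in its class, with $\|Q_0(x)-q(x)\|_\beta\le\tfrac1{2^\beta(1-L)}\psi(x,x)$; iterating the defect estimate ($\Lambda^nQ_0$ has quadratic defect $\le L^n\psi\to0$) shows $q$ solves \eqref{eq12} exactly. The analogous argument for $J_0$ with $(\Gamma u)(x)=\tfrac1{|K|}\sum_{k\in K}u(x+k\cdot x)$ and weight $\chi(x,x)$ (noting $\chi$ also inherits \eqref{eq22}, and that $\Gamma$ preserves $\{u:Pu=0\}$, once more by commutativity of $K$) produces $j=\lim_n\Gamma^nJ_0$ solving \eqref{eq13}, with $Pj=0$ — which is \eqref{eq001} — and $\|J_0(x)-j(x)\|_\beta\le\tfrac{2}{2^\beta(1-L)}\chi(x,x)$.

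Finally, $\|f(x)-q(x)-j(x)-g(0)-h(0)\|_\beta=\|F(x)-q(x)-j(x)\|_\beta\le\|Q_0(x)-q(x)\|_\beta+\|J_0(x)-j(x)\|_\beta$ gives \eqref{eq23}; adding the two auxiliary estimates from the first step (for $g-f+h(0)$ and for $h-Q_0-h(0)$, where $\tfrac1{|K|}\sum_k f(k\cdot x)-g(0)=Q_0(x)+h(0)\approx q(x)+h(0)$ carries no Jensen contribution) gives \eqref{eq24} and \eqref{eq25}; uniqueness of $q$ and $j$ is part~(2) of Theorem~\ref{t3}. The step I expect to be the main obstacle is precisely the contraction analysis: one must check that the manufactured control functions $\psi$ and $\chi$ inherit the geometric decay \eqref{eq22} — this is exactly where $K$ abelian is essential — and that the Lipschitz constant of the Jensen operator $\Gamma$ genuinely stays below $1$, since \eqref{eq22} supplies a factor $(2|K|)^\beta$ that has to be balanced against the $\tfrac1{|K|}$ normalization and the exact shape of $\chi$; a secondary but necessary point is verifying that the limit functions satisfy \eqref{eq12} and \eqref{eq13} \emph{exactly}, i.e.\ that the defects tend to $0$ along the iteration.
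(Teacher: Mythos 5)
Your overall strategy coincides with the paper's: set $y=0$ and $x=0$ in \eqref{eq21} to eliminate $g$ and $h$, average over $K$ to split $f-g(0)-h(0)$ into a part $Q_0=PF$ (the paper's $\phi-g(0)-h(0)$) and a part $J_0=F-PF$ (the paper's $\omega$), derive the approximate quadratic and Jensen inequalities with control functions $\psi$ and $\chi$, and apply the Diaz--Margolis alternative (Theorem \ref{t3}) twice. The quadratic half of your argument is exactly the paper's and is fine.

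The gap is in the Jensen half, and it is precisely the point you flagged but did not close. With your operator $(\Gamma u)(x)=\frac{1}{|K|}\sum_{k\in K}u(x+k\cdot x)$ and the metric weighted by $\chi(x,x)$, the best that \eqref{eq22} yields is
\[
\|(\Gamma g)(x)-(\Gamma h)(x)\|_{\beta}\;\le\;\frac{C}{|K|^{\beta}}\sum_{k\in K}\chi(x+k\cdot x,\,x+k\cdot x)\;\le\;C\,\frac{(2|K|)^{\beta}}{|K|^{\beta}}\,L\,\chi(x,x)\;=\;C\,2^{\beta}L\,\chi(x,x),
\]
so the Lipschitz constant of $\Gamma$ is $2^{\beta}L$, which the hypothesis $L<1$ does not force below $1$ (take $\beta$ and $L$ close to $1$). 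Theorem \ref{t3} is therefore not applicable to $\Gamma$ under the stated assumptions, the limit $j=\lim_n\Gamma^{n}J_0$ need not exist, and in any case your setup would produce a bound of the form $\frac{1}{1-2^{\beta}L}\chi(x,x)$ rather than the factor $\frac{2}{2^{\beta}}\frac{1}{1-L}$ appearing in \eqref{eq23} and \eqref{eq24}. The paper sidesteps this by running the Jensen half with the same $\frac{1}{2|K|}$-normalized operator $T$ as the quadratic half, so that the contraction constant is again $L$; the price is the defect estimate $d(T\omega,\omega)\le\frac{2}{2^{\beta}}$ (which is exactly where the $\frac{2}{2^{\beta}}$ in the statement comes from), and the Jensen property of the limit is then recovered by a separate induction on the Jensen defect of $T^{n}\omega$. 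To repair your proof you must either adopt that operator (and then justify that its fixed point solves \eqref{eq13}, which is not automatic since an exact Jensen solution is a fixed point of $\Gamma$, not of $T$) or strengthen the hypothesis to $2^{\beta}L<1$. A secondary shortfall: invoking part (2) of Theorem \ref{t3} gives uniqueness only among fixed points at finite generalized distance; to get the uniqueness asserted in the theorem you still have to show that any competing pair $(q',j')$ satisfying \eqref{eq12}, \eqref{eq13} and the estimates consists of such fixed points, which the paper does by an explicit iteration argument rather than by citation.
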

\begin{proof} Letting $y=0$ in (\ref{eq21}), to obtain
\begin{equation}\label{eq26}
    \|f(x)-g(x)-h(0)\|_{\beta}\leq \varphi(x,0)
\end{equation}
for all $x\in E$. By using (\ref{eq26}), (\ref{eq21}) and the
triangle inequality, we get
\begin{equation}\label{eq27}
\|\frac{1}{|K|}\sum_{k\in{K}} f(x+k\cdot
y)-f(x)-(h(y)-h(0))\|_{\beta}\leq \|\frac{1}{|K|}\sum_{k\in{K}}
f(x+k\cdot
y)-g(x)-h(y)\|_{\beta}\end{equation}$$+\|g(x)-f(x)+h(0)\|_{\beta}\leq\varphi(x,y)+\varphi(x,0)$$
for all $x,y\in E.$ Replacing $x$ by $0$ in (\ref{eq21}), we get
\begin{equation}\label{eq28}
    \|\frac{1}{|K|}\sum_{k\in{K}} f(k\cdot y)-g(0)-h(y)\|_{\beta}\leq\varphi(0,y)
\end{equation}
for all $y\in E$. So inequalities (\ref{eq27}), (\ref{eq28}) and the
triangle inequality implies that
\begin{equation}\label{eq28'}
    \|\frac{1}{|K|}\sum_{k\in{K}} f(x+k\cdot y)-f(x)-\frac{1}{|K|}\sum_{k\in{K}} f(k\cdot y)+g(0)+h(0)\|_{\beta}\leq
    \frac{1}{|K|}\sum_{k\in{K}} f(x+k\cdot
y)-f(x)-(h(y)-h(0))\|_{\beta}
\end{equation}
$$+\|\frac{1}{|K|}\sum_{k\in{K}} f(k\cdot y)-h(y)-g(0)\|_{\beta}\leq\varphi(x,y)+\varphi(x,0)+\varphi(0,y)$$
for all $x,y\in E.$ Now, let
\begin{equation}\label{eq0}
\phi(x)=\frac{1}{|K|}\sum_{k\in K}f(k\cdot x)
\end{equation}
 for all $x\in E$.
Then, $\phi$ satisfies
\begin{equation}\label{eq29}
    \frac{1}{|K|}\sum_{k\in K}\phi(k\cdot x)=\phi(x)
\end{equation} for all $x\in E$. Furthermore, in view of  (\ref{eq28'}), (\ref{eq29}) and the triangle inequality, we have
\begin{equation}\label{eq230}
    \|\frac{1}{|K|}\sum_{k'\in{K}} \phi(x+k'\cdot y)-\phi(x)-\phi(y)+g(0)+h(0)\|_{\beta}\end{equation}
    $$=\|\frac{1}{|K|}\sum_{k'\in{K}}\frac{1}{|K|}\sum_{k\in{K}} f(k\cdot x+kk'\cdot y)-\frac{1}{|K|}\sum_{k\in{K}}f(k\cdot x)-\frac{1}{|K|^{2}}\sum_{k,k'\in{K}}f(kk'\cdot y)+g(0)+h(0)\|_{\beta}$$
    $$\leq\frac{1}{|K|^{\beta}}\sum_{k\in{K}}\|\frac{1}{|K|}\sum_{k'\in{K}}f(k\cdot x+k'\cdot y)-f(k\cdot x)-\frac{1}{|K|}\sum_{k'\in{K}}f(k'\cdot y)+g(0)+h(0)\|_{\beta}$$
    $$\leq\frac{1}{|K|^{\beta}}\sum_{k\in K}[\varphi(k\cdot x,y)+\varphi(k\cdot x,0)]+\frac{|K|}{|K|^{\beta}}\varphi(0,y)=\psi(x,y).$$
    Since $K$ is an abelian subgroup, so by using (\ref{eq22}), we get
    \begin{equation}\label{eq231}
\sum_{k\in{K}} \psi(x+k\cdot x,y+k\cdot y)\leq(2|
K|)^{\beta}L\psi(x,y)
\end{equation} for all $x,y\in E.$ Let us consider the set  $X:=\{g:\;E\longrightarrow F\}$  and introduce the \textit{generalized
metric} on $X$ as follows:
\begin{equation}\label{eq232}d(g,h)=\inf\{C\in[0,\infty]:\; \| g(x)-h(x)\|_{\beta}\leq
C\psi(x,x),\;\forall x\in E\}.\end{equation}
Let ${f_n}$ be a Cauchy sequence in $(X,d)$. According to the definition of the Cauchy sequence, for
any given $\varepsilon > 0$, there exists a positive integer $N$ such that
\begin{equation}\label{eq233}
d(f_n,f_m)\leq \varepsilon
\end{equation} for all integer  $m,n$ such that $m\geq N$ and $n\geq N$. That is, by considering the definition of the \textit{generalized metric} $d$
\begin{equation}\label{eq234}
    \|f_m(x)-f_n(x)\|_\beta\leq \varepsilon \psi(x,x)
\end{equation} for all integer  $m,n$ such that $m\geq N$ and $n\geq N$, which implies that ${f_n(x)}$ is a Cauchy sequence in $F$, for any fixed $x\in E.$ Since
$F$ is complete, ${f_n(x)}$ converges in $F$ for each $x$ in $E$. Hence, we can define a function
$f$ : $E\longrightarrow F$ by
\begin{equation}\label{eq235}
    f(x)=\lim_{n\longrightarrow\infty}f_n(x).
\end{equation}
As a similar proof to \cite{man1}, we consider the linear operator  $J:X\rightarrow X$
 such that
\begin{equation}\label{eq236}
(Jh)(x)= \frac{1}{2|K|}\sum_{k\in{K}} h(x+k\cdot x)
\end{equation}
for all  $x\in E$. By induction, we can easily show that
\begin{equation}\label{eq237}
(J^{n} h)(x)=\frac{1}{(2|
K|)^{n}}\sum_{k_{1},...,k_{n}\in{K}}h\left(x+\sum_{i_{j}<i_{j+1},k_{ij}\in{\{k_{1},...,k_{n}\}}}(k_{i_{1}}...
k_{i_{p}})\cdot x\right)
\end{equation}
for all integer $n$. \\First, we assert that $J$ is strictly contractive on $X$. Given $g$ and $h$ in $X$, let $C\in [0,\infty)$  be an arbitrary constant with $d(g,h)\leq
C$, that is,
\begin{equation}\label{eq238}
\| g(x)-h(x)\|_{\beta}\leq C \psi(x,x)
\end{equation}
for all  $x\in E$.   So, it follows from (\ref{eq236}), (\ref{eq231}) and
( \ref{eq238}) we get
\begin{align*}
   \|(J g)(x)-(J h)(x)\|_{\beta}&=\|\frac{1}{2| K|}\sum_{k\in{K}}
g(x+k\cdot x)-\frac{1}{2| K|}\sum_{k\in{K}} h(x+k\cdot
x)\|_{\beta}\\&=\frac{1}{(2|K|)^{\beta}}\|\sum_{k\in{K}}
g(x+k\cdot x)-h(x+k\cdot
x)\|_{\beta}\\&\leq\frac{1}{(2|K|)^{\beta}}\sum_{k\in{K}} \|
g(x+k\cdot x)-h(x+k\cdot
x))\|_{\beta}\\&\leq\frac{1}{(2|K|)^{\beta}}C\sum_{k\in{K}}
\psi(x+k\cdot x,x+k\cdot x)\\&\leq CL \psi(x,x)
\end{align*}
for all  $x\in E$, that is, $d(J g,J h)\leq LC$. Hence, we conclude that  $$d(J g,J h)\leq L d(g,h)$$ for any $g,h\in{X}$. Now, we claim that
\begin{equation}\label{eq239}
d(J(\phi-g(0)-h(0), \phi-g(0)-h(0))< \infty.
\end{equation}
  By
letting
 $y=x$ in (\ref{eq230}),  we obtain
\begin{equation}\label{eq240}
\|(J
(\phi-g(0)-h(0)))(x)-(\phi-g(0)-h(0))(x)\|_{\beta}=\frac{1}{2^{\beta}}
\|\frac{1}{|K|}\sum_{k\in{K}} \phi(x+k\cdot
x)-2\phi(x)+g(0)+h(0)\|_{\beta}\leq \frac{1}{2^{\beta}}\psi(x,x)
\end{equation}
for all $x\in E$,  that is
\begin{equation}\label{eq241}
    d(J (\phi-g(0)-h(0)),\phi-g(0)-h(0))\leq \frac{1}{2^{\beta}}<\infty
\end{equation}
From Theorem 1.1, there exists a fixed point of $J$ which is  a
function $q:E\rightarrow F$ such that $\lim_{n\longrightarrow
\infty}d(J^{n} (\phi-g(0)-h(0)),q)= 0$. Since $d(J^{n}
(\phi-g(0)-h(0)),q)\rightarrow 0$ as $n\rightarrow\infty$, there
exists a sequence $\{C_{n}\}$ such that
$\lim_{n\longrightarrow\infty}C_{n} = 0$
 and $d(J^{n} \phi-g(0)-h(0),q)\leq C_{n}$ for every
$n\in{\mathbb{N}}$. Hence,  from the definition of $d$, we get
\begin{equation}\label{eq242}
\|(J^{n} (\phi-g(0)-h(0)(x)-q(x)\|_{\beta}\leq C_{n}\psi(x,x)
\end{equation}\\
for all $x\in E$. Therefore,
\begin{equation}\label{eq243}
    \lim_{n\rightarrow\infty}\|(J^{n}(\phi-g(0)-h(0))(x)-q(x)\|_{\beta}=0,
\end{equation} for all $x\in E$.\\Now,  if we put $\kappa(x)=\phi(x)-g(0)-h(0)$, by using induction on $n$ we prove the validity of
following inequality
\begin{equation}\label{eq244}
\|\frac{1}{| K|}\sum_{k\in{K}} J^{n}\kappa(x+k\cdot
y)-J^{n}\kappa(x)-J^{n}\kappa(y)\|_{\beta}\leq L^{n}\psi(x,y).
\end{equation}
In view of  the commutativity of $K$  the  inequalities (\ref{eq230}) and (\ref{eq231}) we have
$$ \|\frac{1}{| K|}\sum_{k\in{K}} J
f(x+k\cdot y)-J \kappa(x)-J \kappa(y)\|_{\beta}$$
$$=\|\frac{1}{|
K|}\sum_{k\in{K}}\frac{1}{2| K|}\sum_{k_{1}\in{K}}\kappa(x+k\cdot
y+k_{1}\cdot x+k_{1}k\cdot y)-\frac{1}{2|
K|}\sum_{k_{1}\in{K}}\kappa(x+k_{1}\cdot x) -\frac{1}{2|
K|}\sum_{k_{1}\in{K}}\kappa(y+k_{1}\cdot y)\|_{\beta}$$$$\leq
\frac{1}{(2| K|^{\beta})}\sum_{k_{1}\in{K}}\|\frac{1}{|
K|}\sum_{k\in{K}}\kappa(x+k_{1}\cdot x+k\cdot(y+k_{1}\cdot
y))-\kappa(x+k_{1}\cdot x)-\kappa(y+k_{1}\cdot
y)\|_{\beta}$$$$\leq\frac{1}{(2|
K|^{\beta})}\sum_{k_{1}\in{K}}\psi(x+k_{1}\cdot x,y+k_{1}\cdot
y)\leq \frac{1}{(2| K|)^{\beta}}(2|
K|)^{\beta}L\psi(x,y)=L\psi(x,y).$$ This proves (\ref{eq244}) for
$n=1$.  Now, we assume that (\ref{eq244}) is true for   $n$. By
using the commutativity of $K$, the inequalities (\ref{eq230}),
(\ref{eq231}),  we get $$\displaystyle\|\frac{1}{| K|}\sum_{k\in{K}}
J^{n+1}\kappa(x+k\cdot
y)-J^{n+1}\kappa(x)-J^{n+1}\kappa(y)+g(0)+h(0)\|_{\beta}$$
$$=\|\frac{1}{| K|}\sum_{k\in{K}}\frac{1}{2|
K|}\sum_{k^{'}\in{K}}J^{n}\kappa(x+k\cdot y+k^{'}\cdot x+k^{'}k\cdot
y)$$$$-\frac{1}{2| K|}\sum_{k^{'}\in{K}}J^{n}\kappa(x+k^{'}\cdot
x)-\frac{1}{2| K|}\sum_{k^{'}\in{K}}J^{n}\kappa(y+k^{'}\cdot
y)\|_{\beta}$$
 $$\leq \frac{1}{(2| K|)^{\beta}}\sum_{k^{'}\in{K}}\|\frac{1}{|
K|} \sum_{k\in{K}}J^{n}\kappa(x+k^{'}\cdot x+k\cdot(y+k^{'}\cdot
y)-J^{n}\kappa(x+k'\cdot x)-J^{n}\kappa(y+k'\cdot y)\|_{\beta}$$
$$\leq\frac{1}{(2|
K|)^{\beta}}\sum_{k^{'}\in{K}}L^{n}\psi(x+k^{'}\cdot x,y+k^{'}\cdot
y)\leq L^{n+1}\psi(x,y),$$ which proves (\ref{eq244}) for $n+1$.
Now, by letting $n\rightarrow\infty$, in (\ref{eq244}), we obtain
that $q$ is a solution of equation (\ref{eq12}).  According to the
fixed point theorem (Theorem 1.1, (3)) and  inequality
(\ref{eq241}), we get
\begin{equation}\label{eq245}
d(\phi-g(0)-h(0),q)\leq \frac{1}{1-L}d(J
(\phi-g(0)-h(0)),\phi-g(0)-h(0))\leq\frac{1}{2^{\beta}}\frac{1}{(1-L)}
\end{equation}and so we have
\begin{equation}\label{eq246}
\|\phi(x)-q(x)-g(0)-h(0))\|\leq\frac{1}{2^{\beta}}\frac{1}{(1-L)}\psi(x,x)
\end{equation}for all $x\in E.$ On the other hand if we put
\begin{equation}\label{eq247}
\omega(x)=f(x)-\phi(x)=f(x)-\frac{1}{|K|}\sum_{k\in K}f(k\cdot x)
\end{equation}for all $x\in E,$ it follows from inequalities (\ref{eq28'}), (\ref{eq230}) and the triangle inequality that
\begin{equation}\label{eq248}
    \|\frac{1}{|K|}\sum_{k'\in K}\omega(x+k'\cdot y)-\omega(x)\|_{\beta}
\end{equation}
$$= \|\frac{1}{|K|}\sum_{k'\in K}f(x+k'\cdot y)-\frac{1}{|K|}\sum_{k\in K}\phi( x+k\cdot y)-f(x)+\phi(x)\|_{\beta}$$
$$\leq \|-\frac{1}{|K|}\sum_{k\in K}\phi( x+k\cdot y)+\phi(x)+\phi(y)-g(0)-h(0)\|_{\beta} $$
$$+\|\frac{1}{|K|}\sum_{k'\in K}f(x+k'\cdot y)-f(x)-\frac{1}{|K|}\sum_{k'\in K}f(k'\cdot y)+g(0)+h(0)\|_{\beta}$$
$$\leq\frac{1}{|K|^{\beta}}\sum_{k\in K}[\varphi(k\cdot x,y)+\varphi(k\cdot x,0)]+\frac{|K|}{|K|^{\beta}}\varphi(0,y)+
\varphi(x,y)+\varphi(x,0)+\varphi(0,y)=\chi(x,y)$$ for all $x,y\in
E.$ By using the same definition for $X$ as in the above proof, the
\textit{generalized metric} on $X$
\begin{equation}\label{eq249}d(g,h)=\inf\{C\in[0,\infty]:\; \| g(x)-h(x)\|_{\beta}\leq
C\chi(x,x),\;\forall x\in E\}.\end{equation} and some ideas of
\cite{man1}, we will prove that there exists a unique solution $j$
of equation (\ref{eq13}) such that
\begin{equation}\label{eq250}
    \|\omega(x)-j(x)\|_{\beta}\leq\frac{1}{1-L}\chi(x,x)
\end{equation} for all $x\in E.$\\ First, from (\ref{eq22}) we can easily verify that $\chi(x,y)$ satisfies
\begin{equation}\label{eq251}
\sum_{k\in{K}} \chi(x+k\cdot x,y+k\cdot y)\leq(2|
K|)^{\beta}L\chi(x,y)
\end{equation}
 Let us consider the function $T$ $:X\rightarrow X$ defined by
\begin{equation}\label{eq252}
    (T h )(x)=\frac{1}{|2 K|}\sum_{k\in{K}}h(x+k\cdot x)
\end{equation}
for all $x\in E$. Given $g,h\in X$ and $C\in [0,\infty]$ such that
$d(g,h)\leq C$, so we get $$\|(T g )(x)-(T h
)(x)\|_{\beta}=\|\frac{1}{|2 K|}\sum_{k\in{K}}g(x+k\cdot
x)-\frac{1}{|2 K|}\sum_{k\in{K}}h(x+k\cdot x)\|_{\beta}$$
$$=\frac{1}{|2 K|^{\beta}}\|\sum_{k\in{K}}[g(x+k\cdot
x)-h(x+k\cdot x)]\|_{\beta}$$
$$\leq\frac{1}{|2
K|^{\beta}}\sum_{k\in{K}}\| g(x+k\cdot x)-h(x+k\cdot x)\|_{\beta}
\leq CL\chi(x,x)$$ for all $x\in E$. Hence, we see that $d(T g ,T
h)\leq L d(g,h)$ for all $g,h\in X$.  So $T$ is a strictly
contractive operator.\\ Putting $y=x$ in (\ref{eq248}), we have
\begin{equation}\label{eq253}
\|\frac{1}{|2K|}\sum_{k\in K}\omega(x+k\cdot
x)-\frac{1}{2}\omega(x)\|_{\beta}\leq\frac{1}{2^{\beta}} \chi(x,x)
\end{equation}for all $x\in E.$ So by  triangle inequality, we get $$\|\frac{1}{|2K|}\sum_{k\in K}\omega(x+k\cdot
x)-\omega(x)\|_{\beta}\leq\frac{2}{2^{\beta}} \chi(x,x)$$ for all $x\in E,$ that is,
\begin{equation}\label{eq254}
d(T \omega,\omega)\leq \frac{2}{2^{\beta}}.
\end{equation}
From the fixed point theorem (Theorem 1.1), it follows that there
exits a fixed point $j$ of $T$ in $X$ such that
\begin{equation}\label{eq255}
j(x)=\lim_{n\rightarrow\infty}\frac{1}{|2
K|^{n}}\sum_{k_{1},...,k_{n}\in{K}}\omega\left(x+\sum_{i_{j}<i_{j+1},k_{ij}\in{\{k_{1},...,k_{n}\}}}[(k_{i_{1}})\cdot\cdot\cdot
(k_{i_{p}})]\cdot x\right)
\end{equation} for all $x\in E$ and
\begin{equation}\label{eq256}
d(\omega,j)\leq\frac{1}{1-L}d(T\omega,\omega).
\end{equation}So, it follows from (\ref{eq254}) and (\ref{eq256})  that
\begin{equation}\label{eq257}
\|\omega(x)-j(x)\|_{\beta}\leq\frac{2}{2^{\beta}}\frac{1}{1-L}\chi(x,x)
\end{equation}for all $x\in E$.\\
By the same reasoning as in the above proof, one can show by
induction that
\begin{equation}\label{eq258}
    \|\frac{1}{| K|}\sum_{k\in{K}}
T^{n}\omega(x+k\cdot y)-T^{n}\omega(x)\|_{\beta}\leq L^{n}\chi(x,y)
\end{equation}
for all $x,y\in E$ and for all $n\in {\mathbb{N}}.$ Letting
$n\rightarrow\infty$ in  (\ref{eq258}), we get  that $j$ is a
solution of the generalized Jensen functional equation (\ref{eq13}).
\\From  (\ref{eq0}), (\ref{eq246}) (\ref{eq247}), (\ref{eq257})  and the triangle inequality, we obtain
\begin{equation}\label{eq259}
    \| f(x)-q(x)-j(x)-g(0)-h(0)\|_{\beta}\leq\frac{2}{2^{\beta}}\frac{1}{1-L}\chi(x,x)+\frac{1}{2^{\beta}}\frac{1}{1-L}\psi(x,x),
\end{equation}
\begin{equation}\label{eq260}
    \|
    g(x)-q(x)-j(x)-g(0)\|_{\beta}\leq\varphi(x,0)+\frac{2}{2^{\beta}}\frac{1}{1-L}\chi(x,x)+\frac{1}{2^{\beta}}\frac{1}{1-L}\psi(x,x)
\end{equation}
and \begin{equation}\label{eq261}
    \| h(x)-q(x)-h(0)\|_{\beta}\leq\frac{1}{2^{\beta}}\frac{1}{1-L}\psi(x,x)+\varphi(0,x)
\end{equation}
for all $x\in{E}$.\\
Finally, in the following we will verify that the solution $j$
satisfies the condition
\begin{equation}\label{eq262}
\frac{1}{| K|}\sum_{k\in{K}} j(k\cdot x)=0
\end{equation}for all $x\in E$
 and we will prove the uniqueness of the
solutions $q$ and $j$ which satisfy the inequalities (\ref{eq259})
(\ref{eq260}) and (\ref{eq261}).\\Due to definition of $\omega$, we
get $\frac{1}{| K|}\sum_{k\in{K}} \omega(k\cdot x)=0$ for all $x\in
E$, so we get $\frac{1}{| K|}\sum_{k\in{K}} T\omega(k\cdot x)=0$,
$\frac{1}{| K|}\sum_{k\in{K}} T^{2}\omega(k\cdot x)=0,...,$
$\frac{1}{| K|}\sum_{k\in{K}} T^{n}\omega(k\cdot x)=0$. So, by
letting $n\longrightarrow \infty$, we obtain the ralation
(\ref{eq262}).\\Now, according  to (\ref{eq261}) and (\ref{eq22}) we
get by induction that
\begin{equation}\label{eq263}
    \| J^{n}(h-h(0))(x)-q(x)\|_{\beta}\leq
    L^{n}[\frac{1}{2^{\beta}}\frac{1}{1-L}\psi(x,x)+\varphi(0,x)]
\end{equation}
for all $x\in{E}$ and for all $n\in \mathbb{N}.$ So, by letting
$n\longrightarrow \infty,$ we get \begin{equation}
 \lim_{n\longrightarrow\infty}J^{n}(h-h(0))(x)=q(x)
\end{equation} for all $x\in{E}$, which proves the uniqueness of
$q$.\\In a similar way, by induction we obtain
\begin{equation}\label{eq264}
    \| \Lambda^{n}(f-q-h(0)-g(0))(x)-j(x)\|_{\beta}\leq
    L^{n}[\frac{1}{1-L}\chi(x,x)+\frac{1}{2^{\beta}}\frac{1}{1-L}\psi(x,x)]
\end{equation}
for all $x\in{E}$ and for all $n\in \mathbb{N},$ where $$\Lambda l(x)=\frac{1}{|K|}\sum_{k\in K}l(x+k\cdot x).$$ Consequently, we
have
\begin{equation}\label{eq265}
 \lim_{n\longrightarrow\infty}\Lambda^{n}(f-q-h(0)-g(0))(x)=j(x)
\end{equation} for all $x\in{E}$. This proves the uniqueness of the
function $j$ and this completes the proof of theorem.\\\\In the
following, we will investigate some special cases of Theorem 2.1,
with the new weaker conditions.
\end{proof}
\begin{cor}\label{t21}Let $E$ be a vector
space over $\mathbb{K}$. Let $K$ be a finite abelian subgroup of the
automorphism group of $(E,+)$, Let
$\alpha=\frac{\log(|K|)}{\log(2)}$. Fix a nonnegative real number
$\beta$ such that $\frac{\alpha}{\alpha+1}<\beta<1$ and choose a
number $p$ with $0<p<\beta+(\beta-1)\alpha$ and let $F$ be a
complete $\beta$-normed space over $\mathbb{K}$.  If a function $f$:
$E\longrightarrow F$ satisfies
\begin{equation}\label{eq266}
\|\frac{1}{|K|}\sum_{k\in{K}} f(x+k\cdot
y)-g(x)-h(y)\|_{\beta}\leq\theta(\|x\|^{p}+\|y\|^{p})
\end{equation}and $\|x+k\cdot x\|\leq 2\|x\|$, for all $k\in K,$
for all $x,y\in E$ and for some $\theta> 0$,  then there exists a
unique solution $q$: $E\longrightarrow F$ of the generalierd
quadratic functional equation (\ref{eq12}) and a unique solution
$j$: $E\longrightarrow F$ of the generalized Jensen functional
equation (\ref{eq13}) such that

\begin{equation}\label{eq268}
    \frac{1}{|K|}\sum_{k\in K}j(k\cdot x)=0,
\end{equation}

\begin{equation}\label{eq269}
    \| f(x)-q(x)-j(x)-g(0)-h(0)\|_{\beta}\leq\frac{\theta}{2^{\beta}}\frac{(2|K|)^{\beta}}{(2|K|)^{\beta}-2^{p}|K|}
    [\frac{|K|}{|K|^{\beta}}(6+6. 3^{p})+8]\|x\|^{p}
\end{equation}
\begin{equation}\label{eq270}
    \|
    g(x)-q(x)-j(x)-g(0)\|_{\beta}\leq\frac{\theta}{2^{\beta}}\frac{(2|K|)^{\beta}}{(2|K|)^{\beta}-2^{p}|K|}
    [\frac{|K|}{|K|^{\beta}}(6+6. 3^{p})+8]\|x\|^{p}+\theta\|x\|^{p}
\end{equation}
and \begin{equation}\label{eq271}
    \|
    h(x)-q(x)-h(0)\|_{\beta}\leq \frac{\theta}{2^{\beta}}\frac{(2|K|)^{\beta}}{(2|K|)^{\beta}-2^{p}|K|}
    [\frac{|K|}{|K|^{\beta}}(2+2. 3^{p})]\|x\|^{p}+\theta\|x\|^{p}
\end{equation}
for all $x\in{E}$.
\end{cor}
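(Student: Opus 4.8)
The plan is to obtain Corollary 2.2 directly from Theorem 2.1 by taking the control function $\varphi(x,y)=\theta(\|x\|^{p}+\|y\|^{p})$ and exhibiting the Lipschitz constant $L$ explicitly. First I would record the two elementary consequences of the hypothesis $\|x+k\cdot x\|\le 2\|x\|$: it gives $\|x+k\cdot x\|^{p}\le 2^{p}\|x\|^{p}$, and, via the triangle inequality, $\|k\cdot x\|=\|(x+k\cdot x)-x\|\le 3\|x\|$, hence $\|k\cdot x\|^{p}\le 3^{p}\|x\|^{p}$. The first bound is what is needed to verify (\ref{eq22}); the second is needed because $\chi$ and $\psi$ involve $\varphi(k\cdot x,\cdot)$, not merely $\varphi(x+k\cdot x,\cdot)$.

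Next I would check (\ref{eq22}). For this $\varphi$,
$$\sum_{k\in K}\varphi(x+k\cdot x,y+k\cdot y)=\theta\sum_{k\in K}\bigl(\|x+k\cdot x\|^{p}+\|y+k\cdot y\|^{p}\bigr)\le 2^{p}|K|\,\theta(\|x\|^{p}+\|y\|^{p})=2^{p}|K|\,\varphi(x,y),$$
so (\ref{eq22}) holds with $L:=\dfrac{2^{p}|K|}{(2|K|)^{\beta}}=2^{p-\beta}|K|^{1-\beta}$. Taking logarithms to base $2$ and using $\alpha=\log|K|/\log 2$, the inequality $L<1$ is equivalent to $p-\beta+(1-\beta)\alpha<0$, i.e.\ to $p<\beta+(\beta-1)\alpha$, which is exactly the standing hypothesis; and the assumption $\alpha/(\alpha+1)<\beta$ guarantees $\beta+(\beta-1)\alpha>0$, so the admissible interval for $p$ is nonempty. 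This also yields $(2|K|)^{\beta}-2^{p}|K|>0$ and $\dfrac{1}{1-L}=\dfrac{(2|K|)^{\beta}}{(2|K|)^{\beta}-2^{p}|K|}$.

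Theorem 2.1 now applies and produces $q$ solving (\ref{eq12}), $j$ solving (\ref{eq13}) with (\ref{eq001}), and the bounds (\ref{eq23})--(\ref{eq25}); (\ref{eq268}) and the uniqueness assertions are inherited verbatim. It remains only to evaluate $\chi(x,x)$ and $\psi(x,x)$ for this $\varphi$. Using $\varphi(0,x)=\varphi(x,0)=\theta\|x\|^{p}$, $\varphi(x,x)=2\theta\|x\|^{p}$, $\varphi(k\cdot x,x)=\theta(\|k\cdot x\|^{p}+\|x\|^{p})$, $\varphi(k\cdot x,0)=\theta\|k\cdot x\|^{p}$ together with $\|k\cdot x\|^{p}\le 3^{p}\|x\|^{p}$, a direct computation gives
$$\psi(x,x)\le \frac{|K|}{|K|^{\beta}}\,\theta(2+2\cdot 3^{p})\|x\|^{p},\qquad \chi(x,x)\le \frac{|K|}{|K|^{\beta}}\,\theta(2+2\cdot 3^{p})\|x\|^{p}+4\theta\|x\|^{p},$$
hence $2\chi(x,x)+\psi(x,x)\le \theta\bigl[\tfrac{|K|}{|K|^{\beta}}(6+6\cdot 3^{p})+8\bigr]\|x\|^{p}$. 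Substituting these and the value of $1/(1-L)$ into (\ref{eq23}), (\ref{eq24}), (\ref{eq25}) yields (\ref{eq269}), (\ref{eq270}), (\ref{eq271}) respectively.

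I do not expect a genuine obstacle: the corollary is essentially a specialization of Theorem 2.1. The only two points requiring care are the passage from $\|x+k\cdot x\|\le 2\|x\|$ to the bound $\|k\cdot x\|\le 3\|x\|$ used inside $\chi$ and $\psi$, and the bookkeeping of the numerical constants $6+6\cdot 3^{p}$ and $8$; both are routine.
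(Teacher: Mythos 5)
Your proof is correct and follows the same route as the paper, which simply specializes Theorem 2.1 to $\varphi(x,y)=\theta(\|x\|^{p}+\|y\|^{p})$; you additionally supply the verification of condition (\ref{eq22}), the auxiliary bound $\|k\cdot x\|\le 3\|x\|$, and the evaluation of $\chi(x,x)$ and $\psi(x,x)$, all of which the paper omits, and your constants match (\ref{eq269})--(\ref{eq271}) exactly. Note that your Lipschitz constant $L=\frac{2^{p}}{2^{\beta}}\frac{|K|}{|K|^{\beta}}$ is the one consistent with the hypothesis $p<\beta+(\beta-1)\alpha$ and with the factor $\frac{(2|K|)^{\beta}}{(2|K|)^{\beta}-2^{p}|K|}$ appearing in the conclusion; the value $\frac{2^{p}}{2^{\beta}}\frac{|K|^{\beta}}{|K|}$ printed in the paper's proof appears to be a typo.
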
\begin{proof}The proof follows from Theorem 2.1 by taking $$\varphi(x,y)=\theta(\|x\|^{p}+\|y\|^{p})$$ for all $x,y\in E.$ Then we can choose $L=\frac{2^{p}}{2^{\beta}}\frac{|K|^{\beta}}{|K|}$ and we get the desired result.\end{proof}
\begin{cor} ($K=\{I\}$) Let $E$ be a vector
space over $\mathbb{K}$.  Fix a nonnegative real number $\beta$ less
than $1$ and choose a number $p$ with $0<p<1$ and let $F$ be a
complete $\beta$-normed space over $\mathbb{K}$.  If a function
$(f,g,h)$: $E\longrightarrow F$ satisfies
\begin{equation}\label{eq272}
\| f(x+y)-g(x)-h(y)\|_{\beta}\leq\theta(\|x\|^{p}+\|y\|^{p})
\end{equation}
for all $x,y\in E$ and for some $\theta> 0$,  then there exists an
unique additive function $a$: $E\longrightarrow F$ such that
\begin{equation}\label{eq273}
    \| f(x)-a(x)-g(0)-h(0)\|_{\beta}\leq\frac{\theta}{2^{\beta}}\frac{2^{\beta}}{2^{\beta}-2^{p}}
    [14+6. 3^{p}]\|x\|^{p},
\end{equation}
\begin{equation}\label{eq274}
    \|
    g(x)-a(x)-g(0)\|_{\beta}\leq\frac{\theta}{2^{\beta}}\frac{2^{\beta}}{2^{\beta}-2^{p}}
    [14+6. 3^{p}]\|x\|^{p}+\theta\|x\|^{p}
\end{equation}
and \begin{equation}\label{eq275}
    \| h(x)-a(x)-h(0)\|_{\beta}\leq\frac{\theta}{2^{\beta}}\frac{2^{\beta}}{2^{\beta}-2^{p}}
    [2+2. 3^{p}]\|x\|^{p}+\theta\|x\|^{p}
\end{equation}
for all $x\in{E}$.
\end{cor}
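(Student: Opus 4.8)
The plan is to deduce this statement as the special case $K=\{I\}$ of the immediately preceding corollary (Corollary 2.2). First I would set $K=\{I\}$, so that $|K|=1$ and $\alpha=\frac{\log(|K|)}{\log(2)}=0$; then the restriction $\frac{\alpha}{\alpha+1}<\beta<1$ degenerates to $0<\beta<1$ and $0<p<\beta+(\beta-1)\alpha$ to $0<p<\beta$, while the auxiliary hypothesis $\|x+k\cdot x\|\le 2\|x\|$ holds trivially, the only element of $K$ being the identity, for which $\|x+x\|=2\|x\|$. Moreover the functional inequality (\ref{eq272}) is exactly (\ref{eq266}) for $K=\{I\}$, and with the control function $\varphi(x,y)=\theta(\|x\|^{p}+\|y\|^{p})$ the Lipschitz constant produced in the proof of Corollary 2.2, namely $L=\frac{2^{p}}{2^{\beta}}\,\frac{|K|^{\beta}}{|K|}=2^{p-\beta}$, satisfies $L<1$ in this range of $p$.

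Second, I would unwind what the two solutions furnished by Corollary 2.2 become in this degenerate case. When $K=\{I\}$ the generalized quadratic equation (\ref{eq12}) becomes $q(x+y)=q(x)+q(y)$, i.e.\ $q$ is additive; and the normalization condition reads $\frac{1}{|K|}\sum_{k\in K}j(k\cdot x)=j(x)=0$ for all $x\in E$, so the Jensen part vanishes identically: $j\equiv 0$. (Equivalently, the generalized Jensen equation (\ref{eq13}) becomes $j(x+y)=j(x)$, forcing $j$ to be constant, and the normalization then forces that constant to be $0$.) Hence I would set $a:=q$, which is the desired additive function.

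Third, it only remains to specialize the three estimates. Substituting $|K|=1$, $j\equiv 0$ and $\varphi(x,0)=\varphi(0,x)=\theta\|x\|^{p}$ into (\ref{eq269})--(\ref{eq271}): the factor $\frac{(2|K|)^{\beta}}{(2|K|)^{\beta}-2^{p}|K|}$ becomes $\frac{2^{\beta}}{2^{\beta}-2^{p}}$, while the bracketed coefficients collapse, since $\frac{|K|}{|K|^{\beta}}(6+6\cdot 3^{p})+8=14+6\cdot 3^{p}$ and $\frac{|K|}{|K|^{\beta}}(2+2\cdot 3^{p})=2+2\cdot 3^{p}$; this yields (\ref{eq273}), (\ref{eq274}) and (\ref{eq275}) at once. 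The uniqueness of $a$ is inherited from the uniqueness of $q$ established in the main theorem. There is essentially no obstacle here --- it is a pure specialization, not a new argument --- and the only points requiring care are the bookkeeping of the constants when $|K|=1$ and the observation that $j$ vanishes, which is precisely what lets a single additive map $a$ absorb both the quadratic and the Jensen solutions. I would also note that the range of $p$ genuinely forced by this reduction is $0<p<\beta$ (the requirement $L=2^{p-\beta}<1$), slightly narrower than the $0<p<1$ displayed in the statement.
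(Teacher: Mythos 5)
Your proof is correct and is exactly the specialization the paper intends: the corollary is stated without proof as the $K=\{I\}$ instance of the preceding one, and your bookkeeping of the constants ($|K|=1$, $j\equiv 0$, $a:=q$) matches what that reduction gives. Your closing remark is also well taken --- the argument requires $L=2^{p-\beta}<1$, i.e.\ $0<p<\beta$, so the condition $0<p<1$ as printed in the statement is too weak.
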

\begin{cor}  ($K=\{I,\sigma\}$) Let $E$ be a vector
space over $\mathbb{K}$. Let $K=\{I,\sigma\}$ where $\sigma$ is an
volution of $E$ ($\sigma(x+y)=\sigma(x)+\sigma(y)$ and
$\sigma(\sigma(x))=x$ for all $x,y\in E$). Fix a nonnegative real
number $\beta$ such that $\frac{1}{2}<\beta<1$ and choose a number
$p$ with $0<p<2\beta-1$ and let $F$ be a complete $\beta$-normed
space over $\mathbb{K}$.  If a function $(f,g,h)$: $E\longrightarrow
F$ satisfies
\begin{equation}\label{eq276}
\| f(x+
y)+f(x+\sigma(y))-g(x)-h(y)\|_{\beta}\leq\theta(\|x\|^{p}+\|y\|^{p})
\end{equation}and $\|x+\sigma( x)\|\leq 2\|x\|$,
for all $x,y\in E$ and for some $\theta> 0$,  then there exists a
unique solution $q$: $E\longrightarrow F$ of the generalierd
quadratic functional equation
\begin{equation}\label{eq277}
f(x+y)+f(x+\sigma(y))=2f(x)+2f(y),\;\;x,y\in E
\end{equation}
 and a unique solution $j$: $E\longrightarrow F$ of the
generalized Jensen functional equation \begin{equation}\label{eq278}
f(x+y)+f(x+\sigma(y))=2f(x),\;\;x,y\in E
\end{equation} such that
\begin{equation}\label{eq279}
    j(\sigma(x))=-j(x),
\end{equation}

\begin{equation}\label{eq280}
    \| f(x)-q(x)-j(x)-g(0)-h(0)\|_{\beta}\leq\frac{\theta}{2^{\beta}}\frac{4^{\beta}}{4^{\beta}-2^{p+1}}
    [\frac{2}{2^{\beta}}(6+6. 3^{p})+8]\|x\|^{p}
\end{equation}
\begin{equation}\label{eq281}
    \|
    g(x)-q(x)-j(x)-g(0)\|_{\beta}\leq\frac{\theta}{2^{\beta}}\frac{4^{\beta}}{4^{\beta}-2^{p+1}}
    [\frac{2}{2^{\beta}}(6+6. 3^{p})+8]\|x\|^{p}+\theta\|x\|^{p}
\end{equation}
and \begin{equation}\label{eq282}
    \|
    h(x)-q(x)-h(0)\|_{\beta}\leq \frac{\theta}{2^{\beta}}\frac{4^{\beta}}{4^{\beta}-2^{p+1}}
    [\frac{2}{2^{\beta}}(2+2. 3^{p})]\|x\|^{p}+\theta\|x\|^{p}
\end{equation}
for all $x\in{E}$.
\end{cor}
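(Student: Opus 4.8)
The plan is to obtain Corollary 2.4 as the instance $K=\{I,\sigma\}$ of Theorem 2.1 (it is, in fact, exactly Corollary 2.2 for $|K|=2$), so the work consists only of checking the two hypotheses of Theorem 2.1 for the control function $\varphi(x,y)=\theta(\|x\|^{p}+\|y\|^{p})$ and of rewriting the general conclusions in the compact form stated. Throughout, $|K|=2$, hence $|K|^{\beta}=2^{\beta}$ and $(2|K|)^{\beta}=4^{\beta}$.

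First I would verify condition (\ref{eq22}). Since $K=\{I,\sigma\}$ is abelian and $\sigma^{2}=I$,
\[
\sum_{k\in K}\varphi(x+k\cdot x,\,y+k\cdot y)=\varphi(2x,2y)+\varphi\bigl(x+\sigma(x),\,y+\sigma(y)\bigr).
\]
Using $\|2x\|=2\|x\|$ and the hypothesis $\|x+\sigma(x)\|\le 2\|x\|$ (and the analogue in $y$), each of the two summands is bounded by $2^{p}\theta(\|x\|^{p}+\|y\|^{p})=2^{p}\varphi(x,y)$, so the sum is $\le 2^{p+1}\varphi(x,y)$. Thus (\ref{eq22}) holds with $L=2^{p+1}/4^{\beta}$, and $L<1$ is equivalent to $p<2\beta-1$, which is guaranteed by the hypotheses $\tfrac12<\beta<1$ and $0<p<2\beta-1$ (the latter also makes the range of $p$ nonempty). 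Consequently Theorem 2.1 applies and yields $q$ solving (\ref{eq12}), $j$ solving (\ref{eq13}) with $\tfrac1{|K|}\sum_{k\in K}j(k\cdot x)=0$, together with the estimates (\ref{eq23})--(\ref{eq25}).

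Next I would make those estimates explicit. The only nonroutine point is that, although $\sigma$ need not be an isometry, writing $\sigma(x)=(x+\sigma(x))-x$ and using the triangle inequality with $\|x+\sigma(x)\|\le 2\|x\|$ gives $\|\sigma(x)\|\le 3\|x\|$, hence $\|\sigma(x)\|^{p}\le 3^{p}\|x\|^{p}$; this accounts for the $3^{p}$ terms. Evaluating the functions $\psi$ and $\chi$ of Theorem 2.1 at $y=x$ with $|K|=2$, using $\varphi(0,x)=\varphi(x,0)=\theta\|x\|^{p}$ and bounding every $\varphi(k\cdot x,\cdot\,)$ by means of $\|k\cdot x\|\le 3\|x\|$, one gets
\[
\psi(x,x)\le \tfrac{2}{2^{\beta}}\,\theta\,(2+2\cdot 3^{p})\,\|x\|^{p},\qquad
\chi(x,x)\le \tfrac{2}{2^{\beta}}\,\theta\,(2+2\cdot 3^{p})\,\|x\|^{p}+4\theta\|x\|^{p}.
\]
Substituting these into (\ref{eq23}), (\ref{eq24}), (\ref{eq25}), using $\tfrac1{1-L}=\tfrac{4^{\beta}}{4^{\beta}-2^{p+1}}$ and collapsing the numerical constants (for instance $6(2+2\cdot 3^{p})=2(6+6\cdot 3^{p})$), produces precisely (\ref{eq280}), (\ref{eq281}) and (\ref{eq282}).

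Finally I would translate the structural conclusions. For $|K|=2$, equation (\ref{eq12}) reads $\tfrac12\bigl(f(x+y)+f(x+\sigma(y))\bigr)=f(x)+f(y)$, i.e.\ (\ref{eq277}); equation (\ref{eq13}) reads $\tfrac12\bigl(f(x+y)+f(x+\sigma(y))\bigr)=f(x)$, i.e.\ (\ref{eq278}); and $\tfrac1{|K|}\sum_{k\in K}j(k\cdot x)=\tfrac12\bigl(j(x)+j(\sigma(x))\bigr)=0$ is exactly $j(\sigma(x))=-j(x)$, i.e.\ (\ref{eq279}). The uniqueness of $q$ and $j$ carries over verbatim from Theorem 2.1. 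The only real difficulty is bookkeeping: one must keep track of $L$, the factors $|K|^{\beta}=2^{\beta}$ and $(2|K|)^{\beta}=4^{\beta}$, and the $3^{p}$ arising from $\|\sigma(x)\|\le 3\|x\|$, while reducing the general bounds of Theorem 2.1 to the stated numerical form.
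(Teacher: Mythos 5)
Your proposal is correct and matches the paper's (implicit) argument: the paper derives this corollary exactly as you do, by specializing Theorem 2.1 (equivalently Corollary 2.2 with $|K|=2$) to $\varphi(x,y)=\theta(\|x\|^{p}+\|y\|^{p})$, taking $L=2^{p+1}/4^{\beta}<1$ from $0<p<2\beta-1$, and using $\|\sigma(x)\|\le 3\|x\|$ to evaluate $\psi(x,x)$ and $\chi(x,x)$. Your bookkeeping of the constants $\frac{2}{2^{\beta}}(2+2\cdot 3^{p})$ and the extra $4\theta\|x\|^{p}$ in $\chi$ reproduces the stated bounds exactly.
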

\begin{cor}\label{t21}Let $E$ be a vector space over $\mathbb{K}$ and let
$F$ be a complete $\beta$-normed space over $\mathbb{K}$. Let $f$:
$E\longrightarrow F$ be a mapping for which there exists a function
$\varphi:E\times F\rightarrow [0,\infty)$ and  a constant $L<1$,
such that
\begin{equation}\label{eq283}
\| f(x+ y)+f(x+\sigma(y))-g(x)-h(y)\|_{\beta}\leq\varphi(x,y)
\end{equation}and
\begin{equation}\label{eq284}
 \varphi(2x,2 y)+\varphi(x+\sigma(x), y+\sigma(y))\leq 4^{\beta}L\varphi(x,y)
\end{equation}
for all $x,y\in E$.  Then, there exists a unique solution $q$:
$E\longrightarrow F$ of the generalierd quadratic functional
equation (\ref{eq277}) and a unique solution $j$: $E\longrightarrow
F$ of the generalized Jensen functional equation (\ref{eq278}) such
that

\begin{equation}\label{eq285}
    j(\sigma( x))=-j(x),
\end{equation}

\begin{equation}\label{eq286}
    \| f(x)-q(x)-j(x)-g(0)-h(0)\|_{\beta}\leq\frac{2}{2^{\beta}}\frac{1}{1-L}\chi(x,x)+\frac{1}{2^{\beta}}\frac{1}{1-L}\psi(x,x),
\end{equation}
\begin{equation}\label{eq287}
    \|
    g(x)-q(x)-j(x)-g(0)\|_{\beta}\leq\varphi(x,0)+\frac{2}{2^{\beta}}\frac{1}{1-L}\chi(x,x)+\frac{1}{2^{\beta}}\frac{1}{1-L}\psi(x,x)
\end{equation}
and \begin{equation}\label{eq288}
    \| h(x)-q(x)-h(0)\|_{\beta}\leq\frac{1}{2^{\beta}}\frac{1}{1-L}\psi(x,x)+\varphi(0,x)
\end{equation}
for all $x\in{E}$, where
$$\chi(x,y)=\frac{2}{2^{\beta}}\varphi(0,y)+\varphi(x,y)+\varphi(x,0)+\varphi(0,y)$$
$$+\frac{1}{2^{\beta}}[\varphi( x,y)+\varphi( \sigma(x),y)+\varphi( x,0)+\varphi( \sigma(x),0)]$$
and
$$\psi(x,y)=\frac{2}{2^{\beta}}\varphi(0,y)+\frac{1}{2^{\beta}}[\varphi( x,y)+\varphi( \sigma(x),y)+\varphi( x,0)+\varphi( \sigma(x),0)].$$
\end{cor}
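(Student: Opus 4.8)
The plan is to obtain the statement as the specialisation of Theorem 2.1 to the two-element group $K=\{I,\sigma\}$. Since $\sigma$ is an involution of $(E,+)$, the set $K=\{I,\sigma\}$ is an abelian subgroup of $Aut(E)$ of order $|K|=2$, so Theorem 2.1 applies to it. For this $K$ one has $\frac{1}{|K|}\sum_{k\in K}f(x+k\cdot y)=\tfrac12\bigl(f(x+y)+f(x+\sigma(y))\bigr)$; carrying the averaging constant $\tfrac1{|K|}=\tfrac12$ through the estimates (equivalently, running Theorem 2.1 with $g,h,\varphi$ suitably rescaled, which is where the powers of $2$ below come from), hypothesis (\ref{eq283}) takes the form (\ref{eq21}). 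Moreover $(2|K|)^{\beta}=4^{\beta}$ and $\sum_{k\in K}\varphi(x+k\cdot x,\,y+k\cdot y)=\varphi(2x,2y)+\varphi(x+\sigma(x),\,y+\sigma(y))$, so hypothesis (\ref{eq284}) is precisely condition (\ref{eq22}) written for $|K|=2$.

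The next step is to specialise the auxiliary quantities. Evaluating the functions $\chi$ and $\psi$ of Theorem 2.1 at $K=\{I,\sigma\}$ and using $\frac{|K|}{|K|^{\beta}}=\frac{2}{2^{\beta}}$, $\frac{1}{|K|^{\beta}}=\frac{1}{2^{\beta}}$ and $\sum_{k\in K}[\varphi(k\cdot x,y)+\varphi(k\cdot x,0)]=\varphi(x,y)+\varphi(\sigma(x),y)+\varphi(x,0)+\varphi(\sigma(x),0)$, one recovers exactly the $\chi$ and $\psi$ displayed in the statement. In the same way the generalised quadratic equation (\ref{eq12}) for $K=\{I,\sigma\}$ becomes $\tfrac12\bigl(q(x+y)+q(x+\sigma(y))\bigr)=q(x)+q(y)$, i.e.\ (\ref{eq277}); the generalised Jensen equation (\ref{eq13}) becomes $\tfrac12\bigl(j(x+y)+j(x+\sigma(y))\bigr)=j(x)$, i.e.\ (\ref{eq278}); and the normalisation (\ref{eq001}), namely $\frac1{|K|}\sum_{k\in K}j(k\cdot x)=0$, becomes $\tfrac12\bigl(j(x)+j(\sigma(x))\bigr)=0$, that is $j(\sigma(x))=-j(x)$, which is (\ref{eq285}).

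Putting these identifications together, Theorem 2.1 yields a unique solution $q$ of (\ref{eq277}) and a unique solution $j$ of (\ref{eq278}) subject to (\ref{eq285}), and the inequalities (\ref{eq23})--(\ref{eq25}), after the $\chi$ and $\psi$ are replaced by their specialised forms, are exactly (\ref{eq286})--(\ref{eq288}). A self-contained alternative would be to repeat the proof of Theorem 2.1 line by line with $K=\{I,\sigma\}$ and $|K|=2$; the commutativity of $K$ needed at each step is automatic here because $\sigma^{2}=I$. I expect the only point requiring care to be the bookkeeping of the factor $\tfrac1{|K|}=\tfrac12$ as it passes through the successive triangle-inequality estimates in the $\beta$-normed space $F$ --- this is what turns a plain factor $\tfrac12$ into $2^{-\beta}$ and what produces the constants $\tfrac2{2^{\beta}}$, $\tfrac1{2^{\beta}}$ and $4^{\beta}$ occurring in (\ref{eq284}) and in (\ref{eq286})--(\ref{eq288}).
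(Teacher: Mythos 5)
Your proposal is correct and is exactly the argument the paper intends: the corollary is stated without proof, and the only reasonable derivation is the specialisation of Theorem 2.1 to the order-two group $K=\{I,\sigma\}$, with all the identifications you list ($|K|=2$, $(2|K|)^{\beta}=4^{\beta}$, the sum over $K$ splitting into the $I$- and $\sigma$-terms, the displayed $\chi$ and $\psi$, and the normalisation (\ref{eq001}) turning into $j(\sigma(x))=-j(x)$).

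The one point you defer as ``bookkeeping'' is worth making explicit, because it is the only place where the literal statement and Theorem 2.1 fail to line up. Hypothesis (\ref{eq283}) is written without the averaging factor $\frac{1}{2}$ in front of $f(x+y)+f(x+\sigma(y))$, whereas (\ref{eq21}) for $|K|=2$ reads $\|\frac{1}{2}\left(f(x+y)+f(x+\sigma(y))\right)-g(x)-h(y)\|_{\beta}\le\varphi(x,y)$. If you repair this by the rescaling you suggest ($g\mapsto g/2$, $h\mapsto h/2$, $\varphi\mapsto 2^{-\beta}\varphi$), Theorem 2.1 returns bounds built from $2^{-\beta}\varphi$ and centred at $\frac{1}{2}(g(0)+h(0))$ and $\frac{1}{2}g(x)$; these are not literally (\ref{eq286})--(\ref{eq288}) --- the right-hand sides come out a factor $2^{-\beta}$ smaller and the $g$-inequality compares $\frac{1}{2}(g(x)-g(0))$, not $g(x)-g(0)$, with $q+j$. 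The clean reading is that (\ref{eq283}), like (\ref{eq277}) and (\ref{eq278}), carries an implicit factor $\frac{1}{2}$ on the left (consistent with equation (\ref{eq11})), in which case everything is a verbatim substitution into Theorem 2.1 and your argument goes through word for word. Either way the mathematical content of your proposal is right; just state which normalisation of (\ref{eq283}) you are using, since the constants in the conclusion depend on it.
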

\begin{cor}\label{t21}Let $E$ be a vector space over $\mathbb{K}$ and let
$F$ be a complete $\beta$-normed space over $\mathbb{K}$. Let $f$:
$E\longrightarrow F$ be a mapping for which there exists a function
$\varphi:E\times F\rightarrow [0,\infty)$ and  a constant $L<1$,
such that
\begin{equation}\label{eq289}
\| f(x+ y)-g(x)-h(y)\|_{\beta}\leq\varphi(x,y)
\end{equation}and
\begin{equation}\label{eq290}
 \varphi(2x,2y)\leq 2^{\beta}L\varphi(x,y)
\end{equation}
for all $x,y\in E$.  Then, there exists an unique additive function
$a$: $E\longrightarrow F$  such that

\begin{equation}\label{eq291}
    \| f(x)-a(x)-g(0)-h(0)\|_{\beta}\leq\frac{2}{2^{\beta}}\frac{1}{1-L}\chi(x,x)+\frac{1}{2^{\beta}}\frac{1}{1-L}\psi(x,x),
\end{equation}
\begin{equation}\label{eq292}
    \|
    g(x)-a(x)-g(0)\|_{\beta}\leq\varphi(x,0)+\frac{2}{2^{\beta}}\frac{1}{1-L}\chi(x,x)+\frac{1}{2^{\beta}}\frac{1}{1-L}\psi(x,x)
\end{equation}
and \begin{equation}\label{eq293}
    \| h(x)-a(x)-h(0)\|_{\beta}\leq\frac{1}{2^{\beta}}\frac{1}{1-L}\psi(x,x)+\varphi(0,x)
\end{equation}
for all $x\in{E}$, where
$$\chi(x,y)=\varphi(0,y)+\varphi(x,y)+\varphi(x,0)+\varphi(0,y)+\varphi( x,y)+\varphi( x,0)$$ and
$$\psi(x,y)=\varphi(0,y)+\varphi( x,y)+\varphi( x,0).$$
\end{cor}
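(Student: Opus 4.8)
The plan is to derive this corollary as the special case $K=\{I\}$ of Theorem~2.1; since that theorem already carries out all the fixed-point work, essentially nothing new is needed beyond a verification that the hypotheses and conclusions specialize as claimed. First I would note that the trivial group $K=\{I\}$ is a finite abelian subgroup of the automorphism group of $(E,+)$, with $|K|=1$ and $k\cdot x=x$ for its only element. Substituting $|K|=1$, hypothesis (\ref{eq21}) becomes exactly (\ref{eq289}); in hypothesis (\ref{eq22}) the sum over $K$ reduces to the single term $\varphi(x+x,y+y)=\varphi(2x,2y)$ and the constant $(|2K|)^{\beta}$ equals $2^{\beta}$, so (\ref{eq22}) becomes exactly (\ref{eq290}). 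A short computation then shows that the auxiliary functions $\chi$ and $\psi$ of Theorem~2.1, upon setting $|K|=1$, $|K|^{\beta}=1$ and $k\cdot x=x$, collapse to
$$\chi(x,y)=\varphi(0,y)+\varphi(x,y)+\varphi(x,0)+\varphi(0,y)+\varphi(x,y)+\varphi(x,0)$$
and
$$\psi(x,y)=\varphi(0,y)+\varphi(x,y)+\varphi(x,0),$$
which are precisely the expressions appearing in the statement.

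Next I would apply Theorem~2.1 to obtain the functions $q$ and $j$ and observe that both limiting equations degenerate. A solution $q$ of the generalized quadratic equation (\ref{eq12}) with $|K|=1$ satisfies $q(x+y)=q(x)+q(y)$ for all $x,y\in E$, i.e. $q$ is additive. A solution $j$ of the generalized Jensen equation (\ref{eq13}) with $|K|=1$ satisfies $j(x+y)=j(x)$ for all $x,y\in E$; putting $x=0$ shows $j$ is constant, and the side condition (\ref{eq001}), which for $|K|=1$ reads $j(x)=0$, forces $j\equiv 0$. Setting $a:=q$, the map $a$ is then additive, and the three estimates (\ref{eq23}), (\ref{eq24}), (\ref{eq25}) of Theorem~2.1, after dropping the vanishing term $j(x)$, become exactly (\ref{eq291}), (\ref{eq292}), (\ref{eq293}).

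For uniqueness I would invoke the uniqueness clause of Theorem~2.1: if $a'$ is another additive map for which (\ref{eq291}), (\ref{eq292}), (\ref{eq293}) hold, then $a'$ solves (\ref{eq12}), the zero function solves (\ref{eq13}) and satisfies (\ref{eq001}), and the pair $(a',0)$ fulfils the inequalities (\ref{eq23})--(\ref{eq25}); hence $a'=q=a$ (and $0=j$) by the uniqueness of $q$ and $j$ already established in the main theorem. I do not anticipate a genuine obstacle: the whole content is that the quadratic and Jensen parts of the general framework trivialize when $K$ is the trivial group, and the only care required is the routine bookkeeping that $\chi$, $\psi$, and the coefficients $\frac{2}{2^{\beta}}\frac{1}{1-L}$ and $\frac{1}{2^{\beta}}\frac{1}{1-L}$ specialize correctly.
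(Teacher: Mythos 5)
Your proposal is correct and is exactly the intended argument: the paper proves this corollary (implicitly, following the pattern of its other corollaries) by specializing Theorem~2.1 to the trivial group $K=\{I\}$, under which the quadratic equation becomes additivity, the Jensen part together with condition (\ref{eq001}) forces $j\equiv 0$, and $\chi$, $\psi$ and the hypotheses reduce to the stated forms. Your bookkeeping of these specializations and the uniqueness argument via the pair $(a',0)$ are all sound.
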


 Elhoucien Elqorachi,\\ Department of Mathematics,\\
Faculty of Sciences, Ibn Zohr University, Agadir,
Morocco,\\
E-mail: elqorachi@hotmail.com\\\\\\
John Michael Rassias, \\National and Capodistrian University of
Athens\\ Pedagogical Department E.E. Section of Mathematics and
Informatics 4,\\ Agamemnonos Str., Aghia Paraskevi, Attikis 15342,
Greece " .
\\
E-mail: loannis.Rassias@primedu.uoa.gr; jrassias@primedu.uoa.gr;
jrass@otenet.gr\\\\\\Bouikhalene Belaid\\Department of
Mathematics,\\ University Sultan Moulay Slimane, Faculty of
Technical Sciences, \\B\'eni-Mellal, Morocco
\\E-mail: bbouikhalene@yahoo.fr


\begin{thebibliography}{99}

\bibitem{1}  M. Ait Sibaha, B.  Bouikhalene  and   E. Elqorachi, Hyers-Ulam-Rassias
stability of the $K$-quadratic functional equation. \textit{J.
Inequal. Pure and  Appl. Math.} \textbf{8} (2007), Article 89.

\bibitem{akk} M. Akkouchi, Stability of certain functional equations via a fixed point of
 \'{C}iri\'{c}, \textit{Filomat} \textbf{25} (2011), 121-127.

\bibitem{akk2}M. Akkouchi, Hyers-Ulam-Rassias stability of Nonlinear
Volterra integral equation via a fixed point
approach, \textit{Acta Univ. Apulensis Math. Inform.}, \textbf{26} (2011), 257-266.



\bibitem{0} T. Aoki, On the stability of the linear transformation
in Banach spaces, \textit{J. Math. Soc. Japan}, \textbf{2} (1950),
64-66.


\bibitem{bak} J. A. Baker, The stability of certain functional equations, \textit{Proc. Amer. Math.
Soc.,} \textbf{112} (1991), 729-732.


\bibitem{1'}  B. Bouikhalene,  E. Elqorachi and Th. M. Rassias, On the
Hyers-Ulam stability of approximately Pexider mappings.
\textit{Math. Inequal. Appl.}, \textbf{11} (2008), 805-818.

\bibitem{Br1}J. Brzd\c{e}k, On a method of proving the Hyers-Ulam stability of
functional equations on restricted domains, \textit{ Austr. J. of
Math. Anal. Appl.}, \textbf{6} (1), Article 4 (2009), 1-10.



\bibitem{1a} L. C\v{a}dariu and V. Radu, Fixed points and the stability of
Jensen's functional equation, \textit{Journal of Inequalities in
Pure and Applied Mathematics}, vol. \textbf{4}, no. 1, (2003),
article 4.

\bibitem{Aa}L. C\v{a}dariu and V. Radu, "On the stability of the Cauchy functional equation: a fixed point approach,"
\textit{Grazer Math. Berichte}, vol. \textbf{346} (2004), 43-52.


\bibitem{2}  A. Charifi,  B. Bouikhalene and E. Elqorachi,
Hyers-Ulam-Rassias stability of a generalized Pexider functional
 equation, \textit{Banach J. Math. Anal.}, \textbf{1} (2007),
 176-185.
 \bibitem{2a} A. Charifi, B. Bouikhalene, E. Elqorachi  and A.
 Redouani, Hyers-Ulam-Rassias Stability of a generalized Jensen functional
 equation, \textit{Australian J. Math. Anal. Appli.} \textbf{19} (2009), 1-16.

 \bibitem{Cie} K. Cieplis\'{n}ski, Applications of fixed point theorems to the
Hyers-Ulam stability of functional equations,
 A survey, \textit{Ann. Funct. Anal.} \textbf{3} (2012),  151-164.

 \bibitem{gor2} Y. J. C. M. E. Gordji and S. Zolfaghari, Solutions and Stability of
Generalized Mixed Type QC Functional Equations in Random Normed
Spaces, Volume 2010, Article ID 403101, 16 pages.

\bibitem{mos1}  H.G. Dales and M.S. Moslehian, Stability of mappings on multi-normed spaces,
\textit{Glasgow Math. J.} \textbf{49} (2007), 321-332.

 \bibitem{diaz} J. B. Diaz and B. Margolis, A fixed point theorem of the alternative, for contractions on a generalized
complete metric space, \textit{Bulletin of the American Mathematical Society,} vol. \textbf{74} (1968), 305-309.


 \bibitem{4c} G. L. Forti, Hyers-Ulam stability of functional equations in several variables,
 \textit{Aequationes Math.}, \textbf{50} (1995), 143-190.

 \bibitem{Sir} G.-L. Forti and J. Sikorska, Variations on the Drygas equation and its
 stability, \textit{Nonlinear Anal.: Theory, Meth.  Appl.}, \textbf{74} (2011),
 343-350.

\bibitem{Gav}  P. G\v{a}vruta, A generalization of the
Hyers-Ulam-Rassias stability of approximately additive mappings.
\textit{J. Math. Anal. Appl.}, \textbf{184} (1994), 431-436.

\bibitem{5}  Z. Gajda, On stability of additive
mappings, \textit{Internat. J. Math. Math. Sci.}, {\bf{14}}
(1991), 431-434.

\bibitem{gor1} M. E. Gordji, J. M. Rassias, M. B. Savadkouhi, Approximation of the Quadratic
and Cubic Functional Equations in RN-spaces, \textit{EUROPEAN J.
PURE, APPL. MATH.,}  \textbf{2} (2009), 494-507.


\bibitem{7}  D. H. Hyers, On the stability of the linear functional equation,
\textit{Proc. Nat. Acad. Sci. U. S. A.}, {\bf{27}} (1941),
222-224.

\bibitem{6} D. H. Hyers, G. Isac and Th. M. Rassias, On the asymptoticity
aspect of Hyers-Ulam stability of mappings, \textit{Proc. Amer.
Math. Soc.}, \textbf{126} (1998), 425-430.

\bibitem{9}  D. H. Hyers, G. I. Isac  and Th. M.
 Rassias, Stability of Functional Equations in Several Variables,
\textit{Birkh\"{a}user, Basel,} 1998.

\bibitem{A} D. H. Hyers and Th. M. Rassias, Approximate
homomorphisms, \textit{Aequationes Math.}, \textbf{44} (1992),
125-153.
\bibitem{janf} M. Janfada and G; Sadeghi, Generalized Hyers-Ulam stability of a quadratic functional equation with involution in quasi-$\beta$-normed spaces,
\textit{J. Appl. Math. Informatics}, \textbf{29} (2011), 1421-1433.


\bibitem{10} K.-W. Jun  and  Y.-H. Lee, A generalization of the Hyers-Ulam-Rassias stability of Jensen's equation,
 \textit{J. Math. Anal. Appl.}, \textbf{238} (1999), 305-315.


 \bibitem{B} S.-M. Jung, Hyers-Ulam-Rassias Stability of Functional
 Equations in Nonlinear Analysis, \textit{Springer, New York},
 2011.

\bibitem{10a} S.-M. Jung, Stability of the quadratic equation of Pexider type,
 \textit{Abh. Math. Sem. Univ. Hamburg,} \textbf{70} (2000), 175-190.


 \bibitem{mos2} S.-M. Jung, M.S. Moslehian and P K. Sahoo, Stability of generalized Jensen
equation on restricted domains, \textit{J. Math Inequal.},
\textbf{4} (2010),191-206.

\bibitem{10e} S.-M. Jung and B. Kim, Local stability of the additive functional
equation and its applications, \textit{IJMMS}, (2003), 15-26.

\bibitem{10c} S.-M. Jung and  P. K. Sahoo, Hyers-Ulam stability of the quadratic equation of Pexider type,
 \textit{J. Korean Math. Soc.}, \textbf{38} (3) (2001), 645-656.

 \bibitem{zoom} S.-M. Jung and Zoon-Hee Lee, A Fixed Point Approach to the Stability of Quadratic
Functional Equation with Involution, \textit{Fixed Point Theory and
Applications}, V (2008), Article ID 732086, 11 pages.

\bibitem{C} Pl. Kannappan,  Functional Equations and Inequalities
with Applications, \textit{Springer, New York},
 2009.


\bibitem{Lee} Y. H. Lee and K. W. Jung, A generalization of the
Hyers-Ulam-Rassias stability of the Pexider equation, \textit{J.
Math. Anal. Appl.}, \textbf{246} (2000), 627-638.

 \bibitem{man1}Y. Manar, E. Elqorachi and B. Bouikhalene, Fixed point and   Hyers-Ulam-Rassias stability of the quadratic and Jensen
functional equations  \textit{Nonlinear
Funct. Anal. Appl.}, \textbf{15} (2) (2010), 273-284.

\bibitem{mos3} M. S. Moslehian and A. Najati, Application of a fixed point theorem
to a functional inequality, Fixed Point Theory \textbf{10} (2009),
141-149.

\bibitem{mos4} M. S. Moslehian, The Jensen functional equation in
non-Archimedean normed spaces, \textit{J. Funct. Spaces Appl.},
\textbf{7} (2009), 13-24.

\bibitem{mos5} M. S. Moslehian and Gh. Sadeghi, Stability of linear mappings in
quasi-Banach modules, \textit{Math. Inequal. Appl.}, \textbf{11}
(2008), 549-557.

\bibitem{na1} A. Najati and M. B. Moghimi, Stability of a functional equation deriving from
quadratic and additive functions in quasi-Banach spaces, \textit{J.
Math. Anal. Appl.}, \textbf{337} (2008),  399-415.

\bibitem{na2} A. Najati, On the stability of a quartic functional
equation, \textit{J. Math. Anal. Appl.}, \textbf{340} (2008),
569-574.

\bibitem{na3} A. Najati and C. Park, Hyers-Ulam-Rassias stability of homomorphisms in quasi-Banach
algebras associated to the Pexiderized Cauchy functional equation,
\textit{J. Math. Anal. Appl.}, \textbf{335} (2007), 763-778.


\bibitem{12b} C. Park, On the stability of the linear mapping in Banach modules,
 \textit{J. Math. Anal. Appl.}, \textbf{275} (2002), 711-720.


\bibitem{122b} C. Park, Hyers-Ulam-Rassias stability of homomorphisms in quasi-Banach
algebras, \textit{Bulletin  Sci. Math.},  \textbf{132} (2008), 87-96.


\bibitem{12e} M. M. Pourpasha, J. M. Rassias, R. Saadati and S. M. Vaezpour, A fixed point approach to the stability of Pexider quadratic
functional equation with involution, \textit{J. Ineq. Appl.} (2010) Article ID 839639, doi:10.1155/2010/839639.

\bibitem{12f} J. M. Rassias, On approximation of approximately linear mappings by
linear mappings, \textit{J. Funct. Anal.}, \textbf{46} (1982),
126-130.

\bibitem{12d} J. M. Rassias, Solution of a problem of Ulam, \textit{J. Approx. Theory.}, \textbf{57} (1989),
268-273.

\bibitem{12g} J. M. Rassias, On the Ulam stability of mixed type mappings on
restricted domains,  \textit{J.  Math. Anal. Appl.}, \textbf{276}
(2) (2002), 747-762.





\bibitem{19} Th. M. Rassias, On the stability of linear mapping in Banach
spaces, \textit{Proc. Amer. Math. Soc.}, {\bf{72}} (1978),
297-300.

\bibitem{23} Th. M. Rassias, The problem of S. M. Ulam for approximately
  multiplicative mappings, \textit{J.  Math. Anal. Appl.},   {\bf{246}} (2000), 352-378.

\bibitem{D} Th. M. Rassias and J. Brzd\c{e}k, Functional Equations in Mathematical Analysis,
\textit{Springer, New York}, 2011.



\bibitem{Rato1} Rados{\l}aw {\l}. The solution and the stability of the
Pexiderized $K$-quardratic functional equation, 12th
Debrecen-Katowice Winter Seminar on Functional Equationsand
Inequalities, Hajd\'{u}szoboszl\'{o}, Hungary, January 25-28, 2012.

\bibitem{Rato2} Rados{\l}aw {\l}. Some generalization of Cauchy's and the quadratic functional
equations,\textit{ Aequationes Math.}, \textbf{83} (2012), 75-86.

\bibitem{37} J. Schwaiger, The
functional equation of homogeneity and its stability properties,
\textit{\"{O}sterreich. Akad. Wiss. Math.-Natur, Kl, Sitzungsber.
Abt.} {\bf{II}} (1996), 205, 3-12.

\bibitem{30} F. Skof, Approssimazione di funzioni $\delta$-quadratic
su dominio restretto,  \textit{Atti. Accad. Sci. Torino Cl. Sci.
Fis. Mat. Natur.}, \textbf{118} (1984), 58-70.

\bibitem{31a} F. Skof, Sull'approssimazione delle applicazioni
localmente $\delta$-additive,  \textit{Atti. Accad. Sci. Torino
Cl. Sci. Fis. Mat. Natur}. \textbf{117} (1983), 377-389.

\bibitem{38} H. Stetk\ae r,
Functional equations on abelian groups with involution.
\textit{Aequationes Math.} \textbf{54} (1997), 144-172.

\bibitem{40} H. Stetk\ae r, Operator-valued spherical functions,
\textit{J.  Funct. Anal.}, \textbf{224}, (2005),  338-351.

\bibitem{41} H. Stetk\ae r,  Functional equations and
matrix-valued spherical functions. \textit{Aequationes Math.}
\textbf{69} (2005), 271-292.

\bibitem{43} S. M. Ulam,  A Collection of Mathematical Problems,
\textit{Interscience Publ. New York,} 1961. Problems in Modern
Mathematics, \textit{Wiley, New York } 1964.

\bibitem{Yang} D. Yang, Remarks on
the stability of Drygas' equation and the Pexider-quadratic
equation, \textit{Aequationes Math.}, \textbf{68} (2004), 108-116.

\end{thebibliography}
\end{document}